\newtheorem{theorem}{Theorem}[section]
\newtheorem{lemma}[theorem]{Lemma}
\newtheorem{proposition}[theorem]{Proposition}
\theoremstyle{definition}
\newtheorem{definition}[theorem]{Definition}
\theoremstyle{remark}
\numberwithin{equation}{section}
\newcommand{\divrg}{\mathrm{div}\,}
\begin{document}
\title[Unique continuation]{Strong unique continuation\\for general elliptic equations in 2D}
\author{Giovanni Alessandrini}
\address{Dipartimento di Matematica e Informatica, Universit\`a degli Studi di Trieste, Italy}

\email{alessang@units.it}

\subjclass[2000]{} \keywords{}

\begin{abstract} We prove that solutions to elliptic equations in
two variables in divergence form, possibly non--selfadjoint and
with lower order terms, satisfy the strong unique continuation
property.

\end{abstract}

\maketitle
\section{Introduction}
Given a bounded connected open set $\Omega \subset \mathbb{R}^2$,
we consider weak solutions $u\in W^{1,2}(\Omega)$ to the elliptic
equation
\begin{equation}\label{eqbase}
L u =  0 \ , \ \textrm{in} \ \Omega \ ,
\end{equation}
where $L$ is defined as follows
\begin{equation}\label{opbase}
L u = - \divrg(A\nabla u + u B) + C\cdot \nabla u + du  \   .
\end{equation}
Here, the coefficients $A, B, C, d$ are assumed to satisfy the
hypotheses listed below. For simplicity, and with no loss of
generality, they are assumed to be defined on all of
$\mathbb{R}^2$.

$A=\{a_{ij}\}$ is a positive definite, \emph{possibly
non--symmetric}, $2\times 2$ matrix with
$L^{\infty}(\mathbb{R}^2)$ entries. We express its uniform
ellipticity by the following bounds (see, for instance,
\cite{ANannales} for their equivalence to other customary
formulations of ellipticity). For a given $K\geq 1$
\begin{equation}\label{ellTar}
\begin{array}{ccrllll}
A(x) \xi\cdot \xi&\geq& K^{-1} |\xi|^2&,&\hbox{for every $\xi \in \mathbb R^2$ and for a.e. $x\in\mathbb R^2$ ,}\\
A^{-1}(x) \xi \cdot \xi &\geq & K^{-1} |\xi|^2&,&\hbox{for every
$\xi \in \mathbb R^2$ and for a.e. $x\in\mathbb R^2$ .}
\end{array}
\end{equation}
For a given $q>2$, $B=\{b_{i}\}, C=\{c_{i}\}$ are vector valued
functions belonging to $L^q(\mathbb{R}^2;\mathbb{R}^2)$ and $d$ is
an $L^{q/2}(\mathbb{R}^2)$ function. Altogether, for a given
$\kappa>0$, they are assumed to satisfy
\begin{equation}\label{lots}
\|B\|_{L^q(\mathbb{R}^2)}+\|C\|_{L^q(\mathbb{R}^2)}+\|d\|_{L^{q/2}(\mathbb{R}^2)}
\leq \kappa \ .
\end{equation}
The aim of this note is to prove the following theorem.

\begin{theorem}\label{SUCP}
The operator $L$ has the strong unique continuation property.
\end{theorem}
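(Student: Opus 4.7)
The plan is to reduce the scalar second-order equation $Lu = 0$ to a first-order complex equation of Beltrami type for a function $w = u + iv$, with $v$ a stream-type conjugate of $u$. Once such an equation with subcritical lower-order terms is obtained, Bers' similarity principle together with the Bers--Nirenberg representation writes any solution locally as $w = e^{s}\,(\Phi\circ\chi)$, where $s\in C^{\alpha}_{\mathrm{loc}}$, $\chi$ is a $K$-quasiconformal homeomorphism, and $\Phi$ is holomorphic. Since SUCP is classical for holomorphic functions and is invariant both under H\"older quasiconformal changes of variable and under multiplication by bounded factors bounded away from zero, the problem reduces to showing that infinite-order vanishing of $u$ at $z_{0}$ forces the same vanishing for $w$.

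\textbf{Construction of the complex equation.} The flux $F := A\nabla u + uB$ fails to be solenoidal: $\divrg F = C\cdot\nabla u + du$. To bypass this, I would solve locally the Poisson equation $\Delta\phi = \divrg F$. By Meyers' higher integrability one has $\nabla u\in L^{p_{0}}_{\mathrm{loc}}$ for some $p_{0}>2$; local boundedness of $u$ (De Giorgi--Nash--Moser, applicable under \eqref{lots} since $q>2$) then yields $C\cdot\nabla u + du\in L^{r}_{\mathrm{loc}}$ for some $r>1$, and Calder\'on--Zygmund together with 2D Sobolev embedding produces $\nabla\phi\in L^{p_{1}}_{\mathrm{loc}}$ with $p_{1}>2$. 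The corrected flux $F-\nabla\phi$ is then divergence-free, so there exists a stream potential $v$ with $\nabla v = R(F-\nabla\phi)$, where $R$ denotes rotation by $\pi/2$. Using \eqref{ellTar} and substituting $u = (w+\bar w)/2$, the first-order system for $w := u + iv$ takes the complex form
\begin{equation}\label{Belt}
\bar\partial w = \mu\,\partial w + \nu\,\overline{\partial w} + \alpha\,w + \beta\,\bar w,
\end{equation}
with $|\mu|+|\nu|\le (K-1)/(K+1)<1$ a.e.\ and coefficients $\alpha,\beta\in L^{p}_{\mathrm{loc}}$ for some $p>2$ assembled from $B$ and $\nabla\phi$.

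\textbf{Conclusion and principal obstacle.} Bers' similarity principle applied to \eqref{Belt} gives $w = e^{s} g$ locally, with $s\in C^{\alpha}_{\mathrm{loc}}$ and $g$ a solution of the homogeneous Beltrami equation; Bers--Nirenberg then yields $g = \Phi\circ\chi$ with $\chi$ quasiconformal and $\Phi$ holomorphic. Assume now that $u$ has a zero of infinite order at $z_{0}\in\Omega$. A Caccioppoli inequality for the first-order system $(u,v)$, combined with Poincar\'e applied to $v$ (normalized, e.g., so that $v(z_{0})=0$), propagates the vanishing rate from $u$ to $v$ and hence to $w$. The H\"older continuity of $\chi$ together with boundedness of $e^{s}$ from above and below then transfer this infinite-order vanishing to $\Phi$ at $\chi(z_{0})$; holomorphy forces $\Phi\equiv 0$, whence $u\equiv 0$ in a neighborhood of $z_{0}$, and connectedness of $\Omega$ concludes. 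The principal technical obstacle is precisely the construction step: the subcritical integrability $p_{1}>2$ of $\nabla\phi$---which is what makes the lower-order coefficients of \eqref{Belt} fall into the range where the similarity principle and the Bers--Nirenberg theorem apply---depends sharply on the strict inequalities $q>2$ for $B, C$ and $q/2>1$ for $d$ encoded in \eqref{lots}.
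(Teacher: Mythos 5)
There is a genuine gap, and it sits exactly at the step you flag as the ``principal technical obstacle'': the correction of the flux by a Poisson potential does not produce an equation of the form \eqref{Belt}. If you set $F=A\nabla u+uB$, solve $\Delta\phi=\divrg F=C\cdot\nabla u+du$ locally, and define $v$ by $\nabla v=J(F-\nabla\phi)$, then the first-order system for $w=u+iv$ reads $\bar\partial w=\mu\,\partial w+\nu\,\overline{\partial w}+\alpha w+\beta\bar w+g$, where the terms coming from $uB$ are indeed pointwise linear in $u$ and give admissible $\alpha,\beta$, but the term $g$ coming from $\nabla\phi$ is an \emph{inhomogeneous} right-hand side: $\nabla\phi$ is a nonlocal function of $u$ (a Newtonian-type potential of $C\cdot\nabla u+du$ over the whole disk) and cannot be rewritten as $\alpha'w+\beta'\bar w$ with coefficients in $L^p$, $p>2$ --- formally dividing by $w$ is uncontrolled precisely near the infinite-order zero you are studying. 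The similarity principle and the Bers--Nirenberg representation apply to the homogeneous-in-$w$ equation, not to one with a free source term, and SUCP is simply false for inhomogeneous equations unless $g$ itself vanishes to infinite order at $z_0$; here it need not, since the source of $\phi$ is supported throughout the disk even if $u$ is flat at $z_0$. For the same reason your Caccioppoli--Poincar\'e step fails: $\|\nabla\phi\|_{L^{p_1}(B_\rho(z_0))}$ does not decay like the local norms of $u$, so the infinite-order vanishing does not propagate from $u$ to $v$ and $w$.

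The paper removes the non-divergence terms $C\cdot\nabla u+du$ by a multiplicative, not potential-theoretic, device: it constructs two positive multipliers, $m$ solving $-\divrg(A\nabla m)+C\cdot\nabla m+dm=0$ (Definition \ref{meLtilde}) and $w$ solving the auxiliary equation $\widetilde L w=0$ whose principal part is $mA^{T}$ (Definition \ref{multiplier2}), and proves the algebraic identity $\widehat L v=wL(mv)$ (Proposition \ref{purediv}). Then $v=u/m$ solves the \emph{pure divergence} equation $\widehat L v=-\divrg(\widehat A\nabla v+v\widehat B)=0$, whose flux is divergence-free and pointwise linear in $(v,\nabla v)$; the stream function and the Beltrami equation with lower-order coefficients in $L^t$, $t>2$, then come out exactly as in your outline, and the vanishing order transfers to the conjugate because $v\widehat B$ is local in $v$ (Lemma \ref{boundvtilde}). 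Note also that the use of $A^{T}$ in the second multiplier is what handles the non-symmetric principal part, which is the paper's main point beyond Schulz; your outline never confronts the non-symmetry. Your overall skeleton (Beltrami reduction, representation $e^{s}F(\chi)$, H\"older quasiconformal change of variables, transfer of flatness to the conjugate) coincides with the paper's; what is missing is a correct reduction to a pure divergence equation, and the flux-correction by $\Delta\phi=\divrg F$ cannot play that role.
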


As is well--known, this means that if a solution $u$ to
\eqref{eqbase} has a zero of infinite order at a point $x_0\in
\Omega$ then $u=0$ identically in $\Omega$.

In order to summarize the past history of results of unique
continuation for elliptic equations in two variables, we may start
with a remark due to Martio \cite{martio}:
\begin{quote}
In the plane uniqueness results can be proved using the theory of
quasiconformal mappings and special representation theorems.
\end{quote}
In fact Martio is referring to Chapter 6 in the book by Bers, John
and Schechter \cite{BJS}, which in turn is mainly based on the
papers by Bers and Nirenberg \cite{bersni} and by Bojarski
\cite{boj}. In these items, a theory of  first order elliptic
systems  in the plane (the so--called Beltrami systems) is
developed, and in particular it is applied to the unique
continuation of second order elliptic equations in
\emph{non--divergence} form, see for instance \cite[\S 6.4]{BJS}.
Thus it seems that up to 1988, a proof of the unique continuation
for equations in divergence form and $L^{\infty}$ coefficients in
the principal part, was not available. On the other hand, assuming
some regularity on the coefficients in the principal part, results
of unique continuation were known, one can refer to Carleman
\cite{carleman33} and Hartman and Wintner \cite{hartmanw}.

In 1992, motivated by  inverse boundary problems \cite{adiaz,
aispow}, the author noticed  that the approach based on Beltrami
equations and on the representation formulas of Bers--Nirenberg
and Bojarski could be used also to prove unique continuation for
pure divergence elliptic equations, that is equations like
\eqref{eqbase}, \eqref{opbase} with $B=C=0$ and $d=0$,
\cite{apreprint}. This result appeared in print soon afterwards in
a joint work with Magnanini \cite{amagnanini}. Next, in
\cite{acourant}, the unique continuation was shown for operators
of the form
\begin{equation}\label{opautagg}
L u = - \divrg(A\nabla u) + du  \   ,
\end{equation}
with $A$ symmetric and $d$ bounded. The idea there was that, on a
sufficiently small disk, we can find a positive solution $w$ to
$Lw=0$. Hence by a classical trick, see for instance Miranda \cite[Ch. I, \S5]{miranda},  we note that any solution $u$ to
$Lu=0$ can be locally factored as $u=wv$ where $v$ solves
\begin{equation}\label{varcost}
 - \divrg(w^2A\nabla v) = 0 \   .
\end{equation}
That is, loosely speaking, $w(- \divrg(A\nabla (w\cdot)) +
dw\cdot) = - \divrg(w^2A\nabla \cdot)$  and we have reduced
ourselves again to a pure divergence form equation. Soon
afterwards, Schulz \cite{schulzzeit} noticed that a slightly more
involved, but similar trick, could be used to treat operators of
the general form \eqref{opbase}, with bounded lower order
coefficients, but still, \emph{symmetric} principal part!

Here we bypass this symmetry obstruction by using a reduction
formula, Proposition \ref{purediv}, on the operator \eqref{opbase}
which involves the use of two positive multipliers $m, w$ instead
of one. These multipliers are obtained as solutions in the small
of two appropriately chosen elliptic equations. In fact, one of
the two multipliers, $w$, is constructed, Definition
\ref{multiplier2}, as a solution of an equation whose coefficients
depend on the previously chosen function $m$, see Definition
\ref{meLtilde}. This forces to take into consideration equations
with unbounded lower order coefficients, and this is the main
reason why, we have assumed since the beginning the integrability
conditions \eqref{lots}.

In the following Section 2 we construct such multipliers and prove
the reduction to a pure divergence equation, Proposition
\ref{purediv}. In the final Section 3 we complete the proof of the
main Theorem \ref{SUCP}.

\section{Preliminaries}
 In what follows, we denote by
$B_R$ a disk of radius $R$ and arbitrary center.
\begin{lemma}\label{meyers}
There exists $p$, $2<p<q$, only depending on $K$ and $q$, and a
number $R_0>0$, only depending on $K, q$ and $\kappa$ such that
for every $R$, $0<R\leq R_0$ and for every $F\in
L^p(B_R;\mathbb{R}^2)$ there exist a unique weak solution $u\in
W^{1,p}_0(B_R)$ to the equation
\begin{equation}\label{local dir}
L u = -\divrg F
\end{equation}
and it satisfies
\begin{equation}\label{localbd}
\|\nabla u\|_{L^p(B_R)} \leq C\|F\|_{L^p(B_R)}
\end{equation}
where $C>0$ only depends on $K$ and $q$.
\end{lemma}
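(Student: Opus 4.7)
The plan is to combine the classical Meyers higher-integrability theorem for the pure principal part with a Banach fixed-point argument, in which smallness of the disk radius is what tames the lower-order coefficients.

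First I invoke Meyers' theorem applied to $L_{0}v := -\divrg(A\nabla v)$: the uniform ellipticity \eqref{ellTar} produces a threshold $p_{1}=p_{1}(K)>2$ such that, for every $p\in(2,p_{1})$ and every $G\in L^{p}(B_{R};\mathbb{R}^{2})$, the Dirichlet problem $L_{0}v=-\divrg G$ admits a unique solution $v\in W^{1,p}_{0}(B_{R})$ with $\|\nabla v\|_{L^{p}(B_{R})}\le C_{1}\|G\|_{L^{p}(B_{R})}$; the constant $C_{1}$ depends only on $K$ and $p$, and is independent of $R$ by scaling. I then fix once and for all $p\in(2,\min\{p_{1},q\})$, choosing $p$ close enough to $2$ if necessary to guarantee the Sobolev embedding $W^{1,p'}_{0}(B_{R})\hookrightarrow L^{(q/2)'}(B_{R})$ used below. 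Equation \eqref{local dir} is then recast as a fixed-point problem $u=Tu$, where $Tu := v\in W^{1,p}_{0}(B_{R})$ is the unique solution provided by the Meyers step to
\[
-\divrg(A\nabla v) = -\divrg(F - uB) - C\cdot\nabla u - du.
\]

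The second ingredient is a smallness estimate for the three perturbative terms. Combining Morrey's embedding $\|u\|_{L^{\infty}(B_{R})}\le cR^{1-2/p}\|\nabla u\|_{L^{p}(B_{R})}$, H\"older's inequality on $B_{R}$, duality against $W^{1,p'}_{0}(B_{R})$, and hypothesis \eqref{lots}, a bookkeeping of exponents that exploits $q>2$ and $p<q$ yields, for instance,
\[
\|uB\|_{L^{p}(B_{R})}\le c\,\kappa\, R^{1-2/q}\|\nabla u\|_{L^{p}(B_{R})},
\]
together with an estimate of the same shape $c\,\kappa\, R^{\alpha}\|\nabla u\|_{L^{p}(B_{R})}$, for some $\alpha=\alpha(p,q)>0$, on $\|C\cdot\nabla u+du\|_{W^{-1,p}(B_{R})}$. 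The Meyers estimate then gives $\|\nabla Tu\|_{L^{p}(B_{R})}\le C_{1}\|F\|_{L^{p}(B_{R})}+\omega(R)\|\nabla u\|_{L^{p}(B_{R})}$ with $\omega(R)\to 0$ as $R\to 0$.

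Since $u\mapsto Tu$ is affine, its linear part is a strict contraction on $W^{1,p}_{0}(B_{R})$ as soon as $R\le R_{0}=R_{0}(K,q,\kappa)$, and Banach's fixed-point theorem (equivalently, a Neumann series for $I$ minus the linear part of $T$) produces a unique $u=Tu$; this is the desired weak solution of $Lu=-\divrg F$, and absorbing the contraction term furnishes the stated bound with constant depending only on $K$ and $q$. I expect the delicate part of the plan to be the exponent bookkeeping: when $q$ is only slightly larger than $2$, the datum $du$ lies in a Lebesgue space very close to $L^{1}$, which forces $p$ to be correspondingly close to $2$ in order for the Sobolev duality to supply a strictly positive power of $R$ on each perturbative term.
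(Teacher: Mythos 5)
Your proposal is correct and follows essentially the same route as the paper: Meyers' theorem for the principal part $L_0=-\divrg(A\nabla \cdot)$, the splitting $L=L_0+M$ with the lower--order terms viewed as a perturbation whose $W^{1,p}_0(B_R)\to W^{-1,p}(B_R)$ norm carries a positive power of $R$ thanks to H\"older/Sobolev and \eqref{lots}, and a contraction (Neumann series) argument on sufficiently small disks, exactly as in the paper's proof. (The extra precaution of pushing $p$ toward $2$ is not actually needed: for every $2<p<q$ one has $\frac{2p}{p-2}>\frac{q}{q-2}$, so the duality estimate for the $du$ term already produces a strictly positive power of $R$.)
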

\begin{proof}
This result is a minor, well--known, variation of a celebrated
theorem of Meyers \cite[Theorem 1]{meyers}. In fact, in
\cite{meyers} it is proven that, considering the principal part
$L_0$ of the operator $L$, that is
\begin{equation}\label{lin0}
L_0u = -\divrg ( A\nabla u )  \ ,
\end{equation}
 there exists $p$, $2<p<q$, and $c_0>0$, only
depending on $K$ and $q$, such that for every $R>0$ we have that
$L_0 : W^{1,p}_0(B_R) \mapsto W^{-1,p}(B_R)$ is invertible and the following estimate holds
\begin{equation}\label{coerciv0}
\|L_0 u\|_{W^{-1,p}(B_R)}\geq c_0 \|\nabla
u\|_{L^{p}(B_{R})} \textrm{ for every } u \in W^{1,p}_0(B_R) \ .
\end{equation}
Let us denote by $M= L-L_0$ the remainder first order operator
\begin{equation}\label{resto}
M u= - \divrg(uB) + C\cdot \nabla u + d u  \ .
\end{equation}
Then equation \eqref{local dir} can be rewritten as
\begin{equation}\label{localcontr}
u+ L_0^{-1}M u = L_0^{-1}( -\divrg F ) \ ,
\end{equation}
and the thesis will follow provided we show that, for sufficiently small $R$ the operator  $L_0^{-1}M$ is a contraction on $W^{1,p}_0(B_R)$.
By \eqref{lots} and by a straightforward use of Sobolev
inequalities, one obtains that, for every $R>0$ and for every $u
\in W^{1,p}_0(B_R), v\in W^{1,p^{\prime}}_0(B_R)$, we have
\begin{equation}\label{restoupperbd}
\begin{array}{c}
|<Mu,v>)| = |\int_{B_R}u B\cdot \nabla v + v C\cdot \nabla u + d u v |\leq \\ \\
\leq C\kappa
(R^{2(\frac{1}{2}-\frac{1}{q})}+R^{2(\frac{1}{p}-\frac{1}{q}+\frac{1}{2}-\frac{1}{q})})
\| \nabla v\|_{L^{p^{\prime}}(B_{R})} \|\nabla
u\|_{L^{p}(B_{R_0})} \ ,
\end{array}
\end{equation}
here $<\cdot,\cdot>$ denotes the dual pairing between
$W^{-1,p}(B_R)$ and $ W^{1,p^{\prime}}_0(B_R)$, and the constant
 $C>0$ only depends on $q$ and $p$, that is on $q$ and $K$.
Consequently, by \eqref{coerciv0}, there exists $R_0>0$, only
depending on $q, K$ and $\kappa$ such that  $\|L_0^{-1}M\| \leq \frac{1}{2}$. And \eqref{localbd} follows with $C=\frac{2}{c_0}$.
\end{proof}
\begin{lemma}\label{meyers2}
Under the same assumptions of Lemma \ref{meyers}, and letting
$R_0, p$ as introduced in the same Lemma,  given $F\in
L^q(B_R;\mathbb{R}^2)$ and $f \in L^s(B_R)$ with
$\frac{1}{s}\leq\frac{1}{2}+\frac{1}{q}$, for every $R\leq R_0$
there exist a unique weak solution $u\in W^{1,p}_0(B_R)$ to the
equation
\begin{equation}\label{local dirFf}
L u = -\divrg F + f
\end{equation}
and it satisfies
\begin{equation}\label{smallbd1}
\|\nabla u\|_{L^p(B_R)} \leq
C(R^{2(\frac{1}{p}-\frac{1}{q})}\|F\|_{L^q(B_R)}+R^{2(\frac{1}{p}-\frac{1}{s})+1}\|f\|_{L^s(B_R)}
) \ ,
\end{equation}
and also
\begin{equation}\label{smallbd2}
\| u\|_{L^{\infty}(B_R)} \leq C
R^{1-\frac{2}{p}}(R^{2(\frac{1}{p}-\frac{1}{q})}\|F\|_{L^q(B_R)}+R^{2(\frac{1}{p}-\frac{1}{s})+1}\|f\|_{L^s(B_R)}
) \ ,
\end{equation}
where $C>0$ only depends on $K, q$ and $s$.
\end{lemma}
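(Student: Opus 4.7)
The plan is to reduce to Lemma~\ref{meyers} by representing the zeroth-order source $f$ as a divergence via an auxiliary Poisson problem, and then to track the $R$-dependence by rescaling to the unit disk.

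First I would let $\varphi\in W^{1,p}_0(B_R)$ be the unique weak solution of $-\Delta\varphi=f$ on $B_R$. The hypothesis $\tfrac{1}{s}\le\tfrac{1}{2}+\tfrac{1}{q}$, together with $p<q$ and the two–dimensional Sobolev embedding $W^{1,p'}_0(B_R)\hookrightarrow L^{s'}(B_R)$, implies $f\in W^{-1,p}(B_R)$, so $\varphi$ is well defined (by the flat case of Meyers, or classical Calder\'on--Zygmund). Writing $f=-\divrg(\nabla\varphi)$, equation~\eqref{local dirFf} becomes
\begin{equation*}
Lu=-\divrg(F+\nabla\varphi).
\end{equation*}
Since $F\in L^q(B_R)\subset L^p(B_R)$ by H\"older, Lemma~\ref{meyers} applies and supplies both existence/uniqueness of $u\in W^{1,p}_0(B_R)$ for $R\le R_0$ and the estimate $\|\nabla u\|_{L^p(B_R)}\le C\bigl(\|F\|_{L^p(B_R)}+\|\nabla\varphi\|_{L^p(B_R)}\bigr)$.

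Next I would extract the $R$-powers by rescaling. H\"older gives $\|F\|_{L^p(B_R)}\le CR^{2(1/p-1/q)}\|F\|_{L^q(B_R)}$, producing the first term in~\eqref{smallbd1}. For $\varphi$, I would set $\psi(y)=\varphi(x_0+Ry)$ on $B_1$; then $-\Delta\psi=R^2\tilde f$ with $\tilde f(y)=f(x_0+Ry)$, and the unit-ball bound $\|\nabla\psi\|_{L^p(B_1)}\le C\|R^2\tilde f\|_{W^{-1,p}(B_1)}\le CR^2\|\tilde f\|_{L^s(B_1)}$ combined with $\|\nabla\varphi\|_{L^p(B_R)}=R^{2/p-1}\|\nabla\psi\|_{L^p(B_1)}$ and $\|\tilde f\|_{L^s(B_1)}=R^{-2/s}\|f\|_{L^s(B_R)}$ yields $\|\nabla\varphi\|_{L^p(B_R)}\le CR^{2(1/p-1/s)+1}\|f\|_{L^s(B_R)}$, giving the second term. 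This proves~\eqref{smallbd1}.

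Finally, since $p>2$, Morrey's embedding applied on the unit disk and rescaled to $B_R$ produces $\|u\|_{L^\infty(B_R)}\le CR^{1-2/p}\|\nabla u\|_{L^p(B_R)}$ for every $u\in W^{1,p}_0(B_R)$; inserting~\eqref{smallbd1} then yields~\eqref{smallbd2}. The only delicate point is checking that the Sobolev embedding $L^s\hookrightarrow W^{-1,p}$ is available under the stated exponent condition; since $p<q$ one has $\tfrac12+\tfrac1q<\tfrac12+\tfrac1p$, so the hypothesis gives strict room and the embedding holds with a constant depending only on $s$ (and implicitly on $p$, hence on $K,q$). The dependence on $K,q,s$ of all constants is thus inherited from Lemma~\ref{meyers} and the standard embedding constants; the rest is bookkeeping of exponents.
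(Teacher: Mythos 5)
Your proof is correct and follows essentially the same route as the paper: the paper also reduces to Lemma \ref{meyers} by writing $f$ as a divergence (it constructs $G\in L^q(B_R;\mathbb{R}^2)$ with $-\divrg G=f$ and $\|G\|_{L^q(B_R)}\leq CR^{2(\frac1q-\frac1s)+1}\|f\|_{L^s(B_R)}$, applies the lemma to $F+G$, and gets \eqref{smallbd2} from the Sobolev inequality), while you realize the same decomposition through the Dirichlet Poisson problem $-\Delta\varphi=f$ and the scaled embedding $L^s\hookrightarrow W^{-1,p}$, which yields the needed $L^p$ bound for $\nabla\varphi$ with the same $R$-powers. The exponent bookkeeping and the Morrey step in your argument check out, so this is a valid variant of the paper's proof.
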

\begin{proof}
This is also a well-known consequence of Meyers' result
\cite[Theorem 1]{meyers}. In  fact we may easily construct $G\in
L^q(B_R;\mathbb{R}^2)$ such that $- \divrg G = f$ and also
\begin{equation}\label{G}
\|G\|_{L^q(B_R)}\leq C
R^{2(\frac{1}{q}-\frac{1}{s})+1}\|f\|_{L^s(B_R)} \ ,
\end{equation}
where $C>0$ only depends on $q$ and $s$. Hence, applying Lemma
\ref{meyers} with $F$ replaced with $F+G$, we obtain
\eqref{smallbd1}, \eqref{smallbd2} follows by a Sobolev
inequality.
\end{proof}
We record here another result, of the same flavor as the previous
lemmas, which shall be used later on.
\begin{lemma}\label{tipocacciopp}
Let
 $u\in W^{1,2}(\Omega)$  be a weak solution to \eqref{eqbase}
in $\Omega$ , and let $p$, $2<p<q$, be the exponent introduced in
Lemma \ref{meyers}. For any two concentric balls $B_{\rho}\subset
B_r \subset \Omega$, we have
\begin{equation}\label{discacciopp}
\|\nabla u\|_{L^p(B_{\rho})} \leq C r^{2(\frac{1}{p}-1)}
\|u\|_{L^2(B_{r})} \ ,
\end{equation}
where $C>0$ only depends on $K, \kappa, q$ and on the ratio
$\frac{r}{\rho}$.
\end{lemma}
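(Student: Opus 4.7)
The plan is to combine a standard Caccioppoli energy inequality with the Meyers-type higher integrability provided by Lemma \ref{meyers2}, stitched together by a cutoff argument. By a covering and iteration argument it suffices to treat the case $r \le R_0$, so I assume this. Fix an intermediate radius $\rho_1 = (\rho+r)/2$ and proceed in two stages.

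In the first stage I would establish
\[
\|\nabla u\|_{L^2(B_{\rho_1})} \le \frac{C}{r-\rho}\,\|u\|_{L^2(B_r)}
\]
by testing $Lu = 0$ against $\eta_0^2 u$, where $\eta_0$ is a standard cutoff supported in $B_r$ with $\eta_0\equiv 1$ on $B_{\rho_1}$. Ellipticity \eqref{ellTar} absorbs the principal gradient term, while the lower-order contributions involving $B,C,d$ are controlled via H\"older and the two-dimensional Sobolev embedding $W^{1,2}\hookrightarrow L^{q'}$ for every finite $q'$, the assumption \eqref{lots} providing the required smallness on balls of small radius. The same embedding then upgrades the energy bound to $\|u\|_{L^q(B_{\rho_1})} \le C\|u\|_{L^2(B_r)}$.

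In the second stage I take $\eta\in C_c^\infty(B_{\rho_1})$ with $\eta\equiv 1$ on $B_\rho$ and $|\nabla\eta|\le C/(r-\rho)$, and set $v := \eta u$. A direct commutator computation, using $Lu=0$ on the support of $\eta$, yields
\[
Lv \;=\; -\divrg(u\,A\nabla\eta)\;+\;u(C-B)\cdot\nabla\eta\;-\;\nabla\eta\cdot A\nabla u,
\]
which has the form $-\divrg F + f$ with $F := u\,A\nabla\eta$ and $f$ the remaining two summands. The first stage puts $F\in L^q(B_{\rho_1})$; the two pieces of $f$ lie in $L^{q/2}$ and $L^2$ respectively, both of which embed into $L^s(B_{\rho_1})$ for $s = 2q/(q+2)$, precisely the borderline exponent $1/s = 1/2 + 1/q$ admitted by Lemma \ref{meyers2}. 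That lemma produces a unique $w\in W^{1,p}_0(B_{\rho_1})$ with $Lw = Lv$ and $\|\nabla w\|_{L^p(B_{\rho_1})}$ controlled by $\|F\|_{L^q}+\|f\|_{L^s}$; the uniqueness part of Lemma \ref{meyers}, carried over to $W^{1,2}_0$ by the same contraction argument when $r\le R_0$, applied to $v-w$ forces $v = w$. Restricting to $B_\rho$ where $v = u$ then delivers \eqref{discacciopp}.

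The main obstacle is that the summand $\nabla\eta\cdot A\nabla u$ in $f$ is naturally only $L^2$. Lemma \ref{meyers2} can nonetheless absorb it because the strict inequality $q>2$ produces an exponent $s<2$ satisfying $1/s \le 1/2 + 1/q$, so that $L^2\hookrightarrow L^s$ holds on the bounded ball $B_{\rho_1}$. Without this slack in the integrability condition the reduction to Lemma \ref{meyers2} would collapse, and one would have to fall back on a classical Gehring-type argument instead.
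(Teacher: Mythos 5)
Your proof is correct and is essentially the argument the paper itself points to: its one-line proof cites Meyers' interior estimate and notes that the bound ``may also be obtained in a straightforward manner from Lemma \ref{meyers} above with the aid of a smooth cutoff function,'' which is precisely your commutator identity $L(\eta u)=-\divrg(u\,A\nabla\eta)+u(C-B)\cdot\nabla\eta-\nabla\eta\cdot A\nabla u$ fed into Lemma \ref{meyers2}, together with the two points that genuinely need saying --- the borderline choice $\tfrac1s=\tfrac12+\tfrac1q$ to absorb the merely-$L^2$ term $\nabla\eta\cdot A\nabla u$, and the identification $v=w$ via unique solvability in $W^{1,2}_0$ for small radii. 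Just keep the dimensional factors: the intermediate bound should read $\|u\|_{L^q(B_{\rho_1})}\le C\,r^{\frac2q-1}\|u\|_{L^2(B_r)}$ (the version without the factor fails already for $u\equiv1$, $A=I$, $B=C=0$, $d=0$ as $r\to0$), and with this corrected form the powers of $r$ in \eqref{smallbd1} combine exactly to the stated $r^{2(\frac1p-1)}$; also, rather than reducing to $r\le R_0$ by ``covering and iteration'' (which would make $C$ depend on $r/R_0$ and not only on $r/\rho$), it is cleaner to simply state and use the lemma for $r\le R_0$, which is all the paper ever needs since it is applied with $r<R_2\le R_0$.
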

\begin{proof} This is indeed Meyers' higher integrability theorem \cite[Theorem 2]{meyers}. It may also be obtained in a
straightforward manner from Lemma \ref{meyers} above with the aid
of a smooth cutoff function.
\end{proof}
The proposition below provides the main tool in the construction
of the required multipliers.
\begin{proposition}\label{multiplier} Under the same assumptions
as above, and letting $R_0, p$ as before, there exists $R_1$,
$0<R_1\leq R_0$, only depending on $K, \kappa$ and $q$, such that
there exists $u\in W^{1,p}(B_{R_1})$ which is a weak solution to
\begin{equation}\label{eqmulti}
L u =  0 \ , \ \textrm{in} \ B_{R_1} \ ,
\end{equation}
and it satisfies
\begin{equation}\label{multibd1}
\frac{1}{2}\leq u \leq 2 \ ,
\end{equation}
\begin{equation}\label{multibd2}
\|\nabla u\|_{L^p(B_{R_1})}\leq 1 \ .
\end{equation}
\end{proposition}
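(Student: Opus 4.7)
The strategy is to look for $u$ as a small perturbation of the constant $1$. Writing $u = 1 + v$, one computes $L(1) = -\divrg B + d$, so $u$ solves $Lu = 0$ if and only if
\[
L v = \divrg B - d \quad \text{in } B_{R_1},
\]
which is an equation of the type handled by Lemma \ref{meyers2} with source $F = -B \in L^q(B_{R_1};\mathbb{R}^2)$ and right-hand side $f = -d \in L^{q/2}(B_{R_1})$. The integrability exponent $s = q/2$ satisfies $\tfrac{1}{s} = \tfrac{2}{q} \leq \tfrac{1}{2}+\tfrac{1}{q}$ because $q>2$, so Lemma \ref{meyers2} applies and produces, for every $R\leq R_0$, a unique $v \in W^{1,p}_0(B_R)$ solving this equation.

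The key point is that the estimates \eqref{smallbd1} and \eqref{smallbd2} give, using \eqref{lots},
\[
\|\nabla v\|_{L^p(B_R)} + R^{\frac{2}{p}-1}\|v\|_{L^\infty(B_R)} \leq C\kappa \bigl(R^{2(\frac{1}{p}-\frac{1}{q})} + R^{2(\frac{1}{p}-\frac{2}{q})+1}\bigr),
\]
and both exponents of $R$ on the right are strictly positive: the first is positive since $p<q$, and the second equals $\tfrac{2}{p}+1-\tfrac{4}{q}$, which is positive because $p<q$ gives $\tfrac{2}{p}>\tfrac{2}{q}$, hence $\tfrac{2}{p}+1>\tfrac{2}{q}+1>\tfrac{4}{q}$ (the last step using $q>2$). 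Consequently, both $\|\nabla v\|_{L^p(B_R)}$ and $\|v\|_{L^\infty(B_R)}$ tend to zero as $R\to 0^+$, with a rate depending only on $K, q$ and $\kappa$.

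It then suffices to choose $R_1\in(0,R_0]$, depending only on $K, \kappa, q$, so small that
\[
\|v\|_{L^\infty(B_{R_1})} \leq \tfrac{1}{2}, \qquad \|\nabla v\|_{L^p(B_{R_1})} \leq 1,
\]
and set $u := 1 + v \in W^{1,p}(B_{R_1})$. By construction $Lu = L(1) + Lv = 0$ weakly in $B_{R_1}$, and the two displayed inequalities immediately yield \eqref{multibd1} and \eqref{multibd2}. There is no real obstacle: the entire argument is a bookkeeping application of Lemma \ref{meyers2}, and the only thing one has to be mildly careful about is verifying that both powers of $R$ appearing in the estimates are positive, so that smallness of the perturbation is genuinely available.
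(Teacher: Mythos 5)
Your proposal is correct and follows essentially the same route as the paper: the paper also constructs the solution as $u=1+z$, where $z\in W^{1,p}_0(B_{R_1})$ is the solution given by Lemma \ref{meyers2} with $F=-B$ and $f=-d$, and then chooses $R_1$ small enough that $|z|\leq\tfrac12$ and $\|\nabla z\|_{L^p(B_{R_1})}\leq 1$. You simply make explicit the exponent bookkeeping (the admissibility of $s=q/2$ and the positivity of the powers of $R$) that the paper leaves implicit.
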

\begin{proof}
Let $z \in W^{1,p}_0(B_R)$ be the solution to \eqref{local dir}
obtained in Lemma \ref{meyers2} when $F, f$ are replaced with $
-B, -d$, respectively. We may choose $R=R_1$ small enough so that
\begin{equation}\label{multibd1z}
|z| \leq \frac{1}{2} \ ,
\end{equation}
\begin{equation}\label{multibd2z}
\|\nabla z\|_{L^p(B_{R_1})}\leq 1 \ .
\end{equation}
The thesis follows by picking $u=z+1$.
\end{proof}
In the next definitions we construct the multipliers $m, w$ and
two auxiliary elliptic operators.
\begin{definition}\label{meLtilde}
We define the multiplier $m$ as the solution obtained in the
previous Proposition \ref{multiplier}, when, in the operator $L$
introduced in \eqref{opbase} the coefficient vector $B$ is
replaced with $0$. That is, $m$ is a weak solution to
\begin{equation}\label{equazm}
 - \divrg(A\nabla m ) + C\cdot \nabla m + dm = 0 \textrm{ in } B_{R_1}  \
 ,
\end{equation}
and it satisfies the  bounds \eqref{multibd1}, \eqref{multibd2}.
 Consequently, we define in $B_{R_1}$ the following set of
coefficients
\begin{equation}\label{coeftilde}
\begin{array}{l}
\widetilde{A}= m A^{T}\ , \\
\widetilde{B}= m C - A\nabla m \ , \\
\widetilde{C} = m B \ ,
\end{array}
\end{equation}
here the superscript $(\cdot)^T$ denotes the transpose.
Accordingly, we set
\begin{equation}\label{optilde}
\widetilde{L} u = - \divrg(\widetilde{A}\nabla u + u
\widetilde{B}) + \widetilde{C}\cdot \nabla u  \ .
\end{equation}
Observe that the following bounds are easily obtained
\begin{equation}\label{elltilde}
\begin{array}{ccrllll}
\widetilde{A}(x) \xi\cdot \xi&\geq& \frac{1}{2K} |\xi|^2&,&\hbox{for every $\xi \in \mathbb R^2$ and for a.e. $x\in B_{R_1}$ ,}\\
\widetilde{A}^{-1}(x) \xi \cdot \xi &\geq & \frac{1}{2K}
|\xi|^2&,&\hbox{for every $\xi \in \mathbb R^2$ and for a.e. $x\in
B_{R_1}$ ,}
\end{array}
\end{equation}
\begin{equation}\label{lotstilde}
\|\widetilde{B}\|_{L^p(B_{R_1})}+\|\widetilde{C}\|_{L^p(B_{R_1})}
\leq 2(\pi R_1^2)^{\frac{1}{p}-\frac{1}{q}}\kappa +K\ .
\end{equation}
\end{definition}
\begin{definition}\label{multiplier2} Let $R_0, R_1$ and $p$  be as before. By applying Proposition \ref{multiplier}
 to the operator $\widetilde{L}$ we find that there exists $R_2$,
$0<R_2\leq R_1$, and $t$, $2<t< p$ only depending on $K, \kappa$
and $q$, such that there exists $w\in W^{1,t}(B_{R_2})$ which is a
weak solution to
\begin{equation}\label{eqmultitilde}
\widetilde{L} w =  0 \ , \ \textrm{in} \ B_{R_2} \ ,
\end{equation}
and it satisfies
\begin{equation}\label{multibd1w}
\frac{1}{2}\leq w \leq 2 \ ,
\end{equation}
\begin{equation}\label{multibd2w}
\|\nabla w\|_{L^t(B_{R_2})}\leq 1 \ .
\end{equation}
Such a function $w$ shall be our second multiplier.
Let us fix
 any disk $B_{R}\subset \Omega$ with $R\leq R_2$.
In $B_{R}$ we define
\begin{equation}\label{coefhat}
\begin{array}{l}
\widehat{A}= m w A\ , \\
\widehat{B}= w A\nabla m + m w B -m A^{T}\nabla w - m w C \ ,
\end{array}
\end{equation}
and consequently, we set
\begin{equation}\label{ophat}
\widehat{L} u = - \divrg(\widehat{A}\nabla u + u \widehat{B})  \ .
\end{equation}
Note that the following bounds are easily verified.
\begin{equation}\label{ellhat}
\begin{array}{ccrllll}
\widehat{A}(x) \xi\cdot \xi&\geq& \frac{1}{4K} |\xi|^2&,&\hbox{for every $\xi \in \mathbb R^2$ and for a.e. $x\in B_{R}$ ,}\\
\widehat{A}^{-1}(x) \xi \cdot \xi &\geq & \frac{1}{4K}
|\xi|^2&,&\hbox{for every $\xi \in \mathbb R^2$ and for a.e. $x\in
B_{R}$ ,}
\end{array}
\end{equation}
\begin{equation}\label{lotshat}
\|\widehat{B}\|_{L^t(B_{R})} \leq 2K(1+(\pi
R^2)^{\frac{1}{t}-\frac{1}{p}}) +4 (\pi
R^2)^{\frac{1}{t}-\frac{1}{q}} \kappa    \ .
\end{equation}
\end{definition}
\begin{proposition}\label{purediv}
For any $v \in W^{1,2}(B_{R})$ we have
\begin{equation}\label{eqv}
\widehat{L} v =w L( m v )  \ ,
\end{equation}
as elements of $W^{-1,2}(B_{R})$.
\end{proposition}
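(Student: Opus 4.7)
The plan is to establish \eqref{eqv} by a direct manipulation of $L(mv)$, reorganized so that the action of $\widehat{L}$ on $v$ emerges as a divergence while every remaining term appears as a multiple of $v$ whose coefficient can be recognized as zero via the defining equations of the two multipliers. Throughout I would interpret both sides as functionals on $W^{1,2}_0(B_R)$, so that each divergence is understood distributionally.

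First I would expand $L(mv)$ using the product rule $\nabla(mv) = v\nabla m + m\nabla v$, obtaining
\begin{equation*}
L(mv) = -\divrg\bigl(mA\nabla v + vA\nabla m + mvB\bigr) + mC\cdot\nabla v + vC\cdot\nabla m + dmv,
\end{equation*}
and then multiply by $w$, pushing $w$ inside each divergence via the distributional identity $-w\,\divrg Y = -\divrg(wY) + \nabla w\cdot Y$. The result separates into a divergence, terms linear in $\nabla v$, and terms depending on $v$ but not $\nabla v$. The $\nabla v$ terms assemble into $m(A^T\nabla w + wC)\cdot\nabla v$, which itself is a divergence modulo a multiple of $v$:
\begin{equation*}
m(A^T\nabla w + wC)\cdot\nabla v = \divrg\bigl(vm(A^T\nabla w + wC)\bigr) - v\,\divrg\bigl(m(A^T\nabla w + wC)\bigr).
\end{equation*}
Folding the divergence piece into the running total, the coefficient of $v$ inside the divergence collects exactly to $wA\nabla m + mwB - mA^T\nabla w - mwC$, which is precisely $\widehat{B}$ from \eqref{coefhat}. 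The divergence part is therefore $-\divrg(\widehat{A}\nabla v + v\widehat{B}) = \widehat{L}v$, as desired.

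It then remains to check that the remaining coefficient of $v$ vanishes, namely
\begin{equation*}
E := -\divrg\bigl(mA^T\nabla w + mwC\bigr) + \nabla w\cdot A\nabla m + m\nabla w\cdot B + wC\cdot\nabla m + dmw = 0.
\end{equation*}
This is where the two multiplier equations enter. Written out via \eqref{coeftilde}, the equation $\widetilde{L}w = 0$ reads $-\divrg(mA^T\nabla w + mwC - wA\nabla m) + mB\cdot\nabla w = 0$, so that $-\divrg(mA^T\nabla w + mwC) = -\divrg(wA\nabla m) - mB\cdot\nabla w$. Substituting into $E$, the two $B\cdot\nabla w$ terms cancel, and expanding $-\divrg(wA\nabla m) = -w\,\divrg(A\nabla m) - \nabla w\cdot A\nabla m$ eliminates the $\nabla w\cdot A\nabla m$ contributions as well. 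What survives is $w\bigl[-\divrg(A\nabla m) + C\cdot\nabla m + dm\bigr]$, which is zero by \eqref{equazm}.

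The main obstacle is not a deep analytical one but the bookkeeping: one must identify which cross-terms belong to the divergence and which must be retained as coefficients of $v$, so that the two multiplier equations can be invoked at precisely the right moment. The bounds \eqref{multibd1}--\eqref{multibd2w}, the hypotheses \eqref{lots}, and the two-dimensional Sobolev embedding together guarantee that every product and every divergence appearing in the chain above defines a bounded functional on $W^{1,2}_0(B_R)$, so that the identity \eqref{eqv} holds in $W^{-1,2}(B_R)$ as stated.
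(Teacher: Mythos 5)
Your computation is correct and is essentially the paper's own proof written in strong (distributional) form: the same regrouping, with $\widetilde{L}w=0$ and \eqref{equazm} cancelling exactly the remainder you call $E$, and the divergence part assembling into $\widehat{A}$, $\widehat{B}$ of \eqref{coefhat}. The only delicate point --- multiplying the weak equations \eqref{equazm} and \eqref{eqmultitilde} by the merely Sobolev factor $v$ --- amounts to using $vw\psi$ and $v\psi$ as test functions, which is legitimate because $\nabla m\in L^p$, $\nabla w\in L^t$ with $p,t>2$, and this is precisely how the paper organizes the argument (testing $Lu=-\divrg F$ against $\varphi=w\psi$ and then invoking the two multiplier equations).
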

\begin{proof}
Let $u=mv$ and note that $u \in W^{1,2}(B_{R})$. Let $F\in
L^2(B_R;\mathbb{R}^2)$ be such that $ -\divrg F = Lu $ in the
sense of $W^{-1,2}(B_{R})$. Let $\psi \in C_0^{\infty}(B_{R})$ be
an arbitrary test function. Denote $\varphi = w \psi$ and observe
that $\varphi \in W_0^{1,2}(B_{R})$. Consequently
\begin{equation}
\int A\nabla u \cdot \nabla \varphi + u B\cdot \nabla \varphi +
\varphi C\cdot \nabla u + d u \varphi = \int F\cdot \nabla \varphi
\ ,
\end{equation}
here, and in the rest of this proof, integrals are intended over
$B_{R}$. Using the chain rule on the products $u=m v$, $\varphi =
w \psi$, we obtain
\begin{equation}
\begin{array}{c}
\int (mwA\nabla v \cdot \nabla \psi + m\psi A\nabla v \cdot \nabla
w + \\ \\+ v w A\nabla m \cdot \nabla \psi +  v\psi A\nabla m
\cdot \nabla w +\\ \\+ m v w B\cdot \nabla \psi + m v \psi B\cdot
\nabla w
+\\ \\
 + m w \psi  C\cdot \nabla v + v w \psi   C\cdot \nabla m + \\ \\ + d m v w
 \psi)
=  \int F\cdot \nabla (w \psi) \ .
\end{array}
\end{equation}
By the identity $\psi \nabla v = \nabla(\psi v) - v \nabla \psi$,
we may use the following substitutions
\begin{equation}
m\psi A\nabla v \cdot \nabla w = m A \nabla(\psi v) \cdot \nabla w
- mv A\nabla \psi \cdot \nabla w \ ,
\end{equation}
\begin{equation}
m w\psi C\cdot \nabla v = m w C\cdot \nabla (\psi v) - m w v
C\cdot \nabla \psi \ .
\end{equation}
Therefore
\begin{equation}
\begin{array}{c}
\int (mwA\nabla v \cdot \nabla \psi + v w A\nabla m \cdot \nabla
\psi + m v w B\cdot \nabla \psi - mv A\nabla \psi \cdot \nabla w -
m w v C\cdot \nabla \psi)+\\ \\
+\int (m A \nabla(\psi v) \cdot \nabla w + m w C\cdot \nabla (\psi
v))+\\ \\
+\int ( v\psi A\nabla m \cdot \nabla w + m v \psi B\cdot \nabla w

 +  v w \psi   C\cdot \nabla m  + d m v w
 \psi)
=  \int F\cdot \nabla (w \psi) \ .
\end{array}
\end{equation}
Again by the chain rule, we may substitute
\begin{equation}
v\psi A\nabla m \cdot \nabla w =  A \nabla m \cdot \nabla (v\psi
w) - w A\nabla m \cdot \nabla (v \psi) \ ,
\end{equation}
and obtain
\begin{equation}
\begin{array}{c}
\int (mwA\nabla v \cdot \nabla \psi + v w A\nabla m \cdot \nabla
\psi + m v w B\cdot \nabla \psi - mv A\nabla \psi \cdot \nabla w -
m w v C\cdot \nabla \psi)+\\ \\
+\int (m A \nabla(\psi v) \cdot \nabla w + m w C\cdot \nabla (\psi
v))+\\ \\
+\int (m v \psi B\cdot \nabla w - w A\nabla m \cdot \nabla (v
\psi) ) +\\ \\
+\int(A \nabla m \cdot \nabla (v\psi w)+  v w \psi   C\cdot \nabla
m  + d m v w
 \psi)
=  \int F\cdot \nabla (w \psi) \ .
\end{array}
\end{equation}
Now we note that $v \psi w \in W_0^{1,2}(B_{R})$, therefore by
\eqref{equazm} the fourth integral on the left hand side vanishes.
Then we can rearrange the terms as follows
\begin{equation}
\begin{array}{c}
\int (mwA\nabla v \cdot \nabla \psi + v w A\nabla m \cdot \nabla
\psi + m v w B\cdot \nabla \psi - mv A\nabla \psi \cdot \nabla w -
m w v C\cdot \nabla \psi)+\\ \\
+\int (m A^T \nabla w \cdot \nabla (\psi v) + m w C\cdot \nabla
(\psi v) +m v \psi B\cdot \nabla w - w A\nabla m \cdot \nabla (v
\psi) ) =\\\\= \int F\cdot \nabla (w \psi) \ .
\end{array}
\end{equation}
Again, we note that $v \psi \in W_0^{1,2}(B_{R})$ and by
\eqref{eqmultitilde}, the second integral on the left hand side is
also vanishing. Finally, recalling the notation introduced in
Definition \ref{multiplier2}, we arrive at
\begin{equation}
\begin{array}{c}
\int (\widehat{A}\nabla v \cdot \nabla \psi + v\widehat{B}\cdot
\nabla \psi ) = \int F\cdot \nabla (w \psi) \ ,
\end{array}
\end{equation}
 for every $\psi \in C_0^{\infty}(B_{R})$, ad hence by density, for every  $\psi \in W_0^{1,2}(B_{R})$.
Note, in conclusion,  that the functional $- w \divrg F$ given by
$<- w \divrg F, \psi>= \int F\cdot \nabla (w \psi)$ does indeed
belong to $W^{-1,2}(B_{R})$.
\end{proof}

\section{Proof of the main Theorem}
From now on, let $u$
be a weak solution to \eqref{eqbase}, and let us fix
 any disk $B_{R}\subset \Omega$ with $R< R_2$. We denote
\begin{equation}\label{vdef}
v = \frac{u}{m} \ ,
\end{equation}
where $m$ is the function introduced in Definition \ref{meLtilde}.
Note that, by Proposition \ref{multiplier} and by Lemma
\ref{tipocacciopp}, $v \in W^{1,t}(B_{R})$ and by Proposition
\ref{purediv}
\begin{equation}\label{eqv0}
\widehat{L} v =0  \ ,
\end{equation}
in the weak sense.

The advantage is that from a pure divergence elliptic equation we
can easily pass to a first order elliptic system of Beltrami type.
The procedure is well-known \cite{apreprint,
amagnanini,schulzzeit, ANannales}. Denote
\begin{equation}\label{J}
J=\left(
\begin{array}{ccc}
0&-1\\
1&0
\end{array}
\right)\ ,
\end{equation}
then, by \eqref{eqv0}, $J(\widehat A \nabla v + v\widehat B)$ is
weakly curl-free  in $B_R$ and therefore there exists a function $\tilde v \in W^{1,t}(B_R)$, unique up
to an additive constant, such
that
\begin{equation}\label{firstord}
 \nabla \tilde v= J (\widehat A \nabla v + v\widehat B)  \ ,
\end{equation}
and, since $t>2$, $\tilde v$ is also H\"{o}lder continuous, thus
we can normalize it by setting $\tilde v(x_0) = 0$, where $x_0$
denotes the center of $B_R$. Setting
\begin{equation}\label{deff}
 f=v+i \tilde v
\end{equation}
one has $f \in W^{1,t}(B_R;\mathbb C)$ and, according to Bers and
Nirenberg \cite{bersni}, one can rewrite \eqref{firstord}, in
terms of the complex coordinate $z=x_1+i x_2$, as follows
\begin{equation}\label{beltrami}
\begin{array}{ll}
f_{\bar{z}}=\mu f_z +\nu \overline{f_z} + \alpha f + \beta
\overline f \ , & \hbox{in $B_R$}\ ,
\end{array}
\end{equation}
where, the so called complex dilatations $\mu , \nu$ only depend
(and can be explicitly expressed \cite{ANannales}) on $\widehat
A$, and the lower order coefficients $\alpha, \beta$ only depend
on $\widehat A, \widehat B$. Moreover the following bounds are
easily proven
\begin{equation}\label{munubd}
|\mu|+|\nu|\leq k < 1 \ ,\textrm{ a.e. in } B_R \ ,
\end{equation}
where, in view of the  ellipticity condition \eqref{ellhat}, the
constant $k$ only depends on $K$, see \cite[Proposition
1.8]{ANannales} for a sharp bound. For the lower order
coefficients, recalling \eqref{lotshat}, one can obtain
\begin{equation}\label{lotsbelt}
\|\alpha\|_{L^t(B_{R})}+ \|\beta\|_{L^t(B_{R})} \leq C  \ .
\end{equation}
where $C>0$ only depends on $K, \kappa$ and $q$.

We can now invoke the well-known representation theorem for
solutions of equations of the form \eqref{beltrami}.
\begin{theorem}\label{BersNir}
There exist a $k$-quasiconformal mapping $\chi$ from $\mathbb{C}$
onto itself, a holomorphic function $F$ on $\chi (B_R)$ and a
complex valued H\"{o}lder continuous  function $s$ on $B_R$ such
that
\begin{equation}\label{represform}
f= e^{s}F(\chi).
\end{equation}
Moreover we have that the function $\chi$ and its inverse
$\chi^{-1}$ satisfy the following H\"older continuity properties
\begin{equation}\label{holderchi}
|\chi(z)-\chi(\zeta)|\leq C|z-\zeta|^{\eta},\quad\text{for any
}z,\zeta\in B_R \ ,
\end{equation}
\begin{equation}\label{holderchiinv}
|\chi^{-1}(z)-\chi^{-1}(\zeta)|\leq C|z-\zeta|^{\eta},\quad
\text{for any }z,\zeta\in \chi (B_R) \ ,
\end{equation}
and
\begin{equation}\label{holderchi}
|s(z)-s(\zeta)|\leq C|z-\zeta|^{\eta},\quad\text{for any
}z,\zeta\in B_R \ ,
\end{equation}
where $C$ and $\eta$, $0<\eta<1$, only depend on $K, \kappa$ and
$q$ .
\end{theorem}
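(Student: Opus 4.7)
The plan is to establish the factorization $f = e^{s}F(\chi)$ in two successive steps: a quasiconformal change of variables absorbs the principal-part coefficients $\mu,\nu$, after which an exponential factor absorbs the lower-order coefficients $\alpha,\beta$.

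First, I extend $\mu,\nu$ by zero outside $B_R$, which preserves $|\mu|+|\nu|\leq k<1$ on all of $\mathbb{C}$. The measurable Riemann mapping theorem for the $\mathbb{R}$-linear Beltrami equation (see \cite{ANannales}) then yields a $k$-quasiconformal homeomorphism $\chi:\mathbb{C}\to\mathbb{C}$, normalized by $\chi(z)-z = O(|z|^{-1})$ at infinity, satisfying
\[
\chi_{\bar z} = \mu\,\chi_z + \nu\,\overline{\chi_z}\qquad \text{a.e.\ in }\mathbb{C}.
\]
The estimates \eqref{holderchi} and \eqref{holderchiinv} for $\chi$ and $\chi^{-1}$ are then the classical Mori-type H\"older estimates for $k$-quasiconformal maps, with exponent depending only on $k$, hence only on $K$.

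Next, I set $h(w):=f(\chi^{-1}(w))$ on $\chi(B_R)$. A direct chain-rule computation, using the Beltrami equation for $\chi$, shows that the $\mu,\nu$-terms of \eqref{beltrami} cancel out exactly, leaving a pure Vekua-type equation
\[
h_{\bar w} = \tilde\alpha\, h + \tilde\beta\, \bar h \quad \text{in } \chi(B_R),
\]
with new coefficients $\tilde\alpha,\tilde\beta\in L^{t'}(\chi(B_R))$ for some $t'>2$. On $\{h\neq 0\}$ define $A:=\tilde\alpha+\tilde\beta\,\bar h/h$ and set $A:=0$ on $\{h=0\}$, so that $|A|\leq|\tilde\alpha|+|\tilde\beta|$ and $h_{\bar w}=A\,h$. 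Extending $A$ by zero outside $\chi(B_R)$ and applying the Cauchy--Pompeiu operator produces $\sigma\in C^{1-2/t'}(\mathbb{C})$ with $\sigma_{\bar w}=A$ a.e. Then $F:=e^{-\sigma}h$ satisfies $F_{\bar w}=0$ in $\chi(B_R)$ and is therefore holomorphic there. Setting $s(z):=\sigma(\chi(z))$, which is H\"older continuous as a composition of two H\"older maps, one arrives at $f = e^{s}F(\chi)$ with the claimed modulus of continuity for $s$.

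The main technical obstacle lies in the transition to the Vekua equation: one must verify that the pulled-back coefficients $\tilde\alpha,\tilde\beta$ still lie in some $L^{t'}$ with exponent $t'>2$ after composition with the $k$-quasiconformal map $\chi^{-1}$. This depends on the higher integrability of the Jacobian of quasiconformal mappings (Gehring's lemma, or its sharp Astala version), and it is precisely the reason why the paper is developed throughout for $L^p$ data with an exponent strictly greater than $2$, as produced in Lemmas \ref{meyers}--\ref{meyers2} and Proposition \ref{multiplier}.
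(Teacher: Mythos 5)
The paper does not actually prove this statement: it is quoted as a classical result, with the proof being a citation of Bers--Nirenberg, Bojarski and \cite[Section 6.3]{BJS}. Your attempt to reconstruct the proof, however, contains a genuine gap at its central step. You solve the \emph{same} $\mathbb{R}$-linear homogeneous Beltrami equation $\chi_{\bar z}=\mu\chi_z+\nu\overline{\chi_z}$ and claim that the chain rule makes the $\mu,\nu$-terms ``cancel out exactly,'' leaving a pure Vekua equation for $h=f\circ\chi^{-1}$. This is false when $\nu\neq 0$. Writing $f=h\circ\chi$, one has $f_z=h_w\chi_z+h_{\bar w}\overline{\chi_{\bar z}}$ and $f_{\bar z}=h_w\chi_{\bar z}+h_{\bar w}\overline{\chi_z}$, so that
\begin{equation*}
f_{\bar z}-\mu f_z-\nu\overline{f_z}
=\nu\,\overline{\chi_z}\,\bigl(h_w-\overline{h_w}\bigr)
+h_{\bar w}\bigl(\overline{\chi_z}-\mu\,\overline{\chi_{\bar z}}\bigr)
-\nu\,\overline{h_{\bar w}}\,\chi_{\bar z}\ .
\end{equation*}
The first term, proportional to $\mathrm{Im}\,h_w$, survives; $h$ therefore satisfies another $\mathbb{R}$-linear equation with first-order terms, not an equation of the form $h_{\bar w}=\tilde\alpha h+\tilde\beta\bar h$. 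The conjugate-derivative term $\nu\overline{f_z}$ — which is precisely what encodes the non-symmetry the paper is designed to handle — cannot be removed by composing with a solution-independent quasiconformal map.

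The classical argument behind the cited theorem proceeds differently: one absorbs the conjugate terms into coefficients depending on the solution itself, setting $\tilde\mu=\mu+\nu\,\overline{f_z}/f_z$ where $f_z\neq0$ (and anything with $|\tilde\mu|\leq k$ elsewhere) and $A=\alpha+\beta\,\bar f/f$ where $f\neq0$, so that $|\tilde\mu|\leq k$, $|A|\leq|\alpha|+|\beta|\in L^t$, and $f_{\bar z}=\tilde\mu f_z+Af$. One then applies the similarity principle in the \emph{original} variable: choose $s$ H\"older continuous with $s_{\bar z}-\tilde\mu s_z=A$ (solvable since $t>2$, with norms controlled by $k$ and $\|A\|_{L^t}$), so that $g=e^{-s}f$ satisfies the homogeneous $\mathbb{C}$-linear equation $g_{\bar z}=\tilde\mu g_z$, and finally the Stoilow factorization gives $g=F\circ\chi$ with $\chi$ a normalized $k$-quasiconformal homeomorphism of $\mathbb{C}$ and $F$ holomorphic; the H\"older bounds \eqref{holderchi}, \eqref{holderchiinv} are then Mori-type estimates as you say. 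Note that this route also disposes of what you called the main technical obstacle: since $s$ is built in the $z$-variable, no composition of the lower-order coefficients with $\chi^{-1}$ (and hence no Gehring/Astala higher-integrability argument) is needed; the exponent $p>2$ in the paper serves the Sobolev embedding and the bounds \eqref{lotsbelt}, not that purpose. If you want to keep your own write-up, you must either adopt this solution-dependent reduction or otherwise justify the composition step; as written, it fails for every $\nu\not\equiv0$.
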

\begin{proof} This is a celebrated theorem of Bers and Nirenberg \cite[page 116]{bersni}, see also Bojarski \cite[Theorem 4.3]{boj}
and the book  \cite[Section 6.3]{BJS}. \end{proof}

It is now evident that if $f$ is nontrivial then it may vanish
only up to finite order, in fact in \eqref{represform} the
exponential $e^s$ never vanishes, and $F(\chi)$ may have only
isolated zeroes of finite order in view of \eqref{holderchiinv}.
Only one small step remains in order to show the strong unique
continuation property for $u = m \Re (e^{s}F(\chi))$.
\begin{lemma}\label{boundvtilde}
Let $u\in W^{1,2}(\Omega)$  be a weak solution to \eqref{eqbase}
in $\Omega$ , and let $p$, $2<p<q$, be the exponent introduced in
Lemma \ref{meyers}. Let $v, \tilde v$ as introduced above. For any
two balls $B_{\rho}, B_r$ concentric to $B_R$, $\rho < r < R$, we
have
\begin{equation}\label{Linf}
\|\tilde v\|_{L^{\infty}(B_{\rho})} \leq C
\|v\|_{L^{\infty}(B_{r})} \ ,
\end{equation}
where $C>0$ only depends on $K, \kappa, q$ and on the ratio
$\frac{r}{\rho}$.
\end{lemma}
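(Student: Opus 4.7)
The plan is to exploit three facts in sequence: that $\tilde v$ lies in $W^{1,t}$ with $t>2$ and is normalized to vanish at the center $x_0$, that its gradient is controlled pointwise by the defining identity \eqref{firstord}, and that $\nabla v$ itself is in $L^t$ with an estimate of Caccioppoli--Meyers type. Since $t>2$, Morrey's embedding together with $\tilde v(x_0)=0$ and $x_0\in B_\rho$ immediately gives
\begin{equation*}
\|\tilde v\|_{L^{\infty}(B_{\rho})} \leq C\,\rho^{1-2/t}\,\|\nabla \tilde v\|_{L^{t}(B_{\rho''})},
\end{equation*}
for some intermediate radius $\rho < \rho'' < r$ (the constant depending only on $t$ and on $r/\rho$).

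Next I would take absolute values in \eqref{firstord}, using boundedness of $\widehat A$ from \eqref{ellhat}, to get the pointwise bound $|\nabla\tilde v|\leq C(|\nabla v|+|v|\,|\widehat B|)$, and integrate in $L^t(B_{\rho''})$:
\begin{equation*}
\|\nabla \tilde v\|_{L^{t}(B_{\rho''})} \leq C\,\|\nabla v\|_{L^{t}(B_{\rho''})} + C\,\|v\|_{L^{\infty}(B_{\rho''})}\,\|\widehat B\|_{L^{t}(B_{R})}.
\end{equation*}
The norm $\|\widehat B\|_{L^{t}(B_{R})}$ is controlled by \eqref{lotshat} in terms of $K,\kappa,q$, and $\|v\|_{L^{\infty}(B_{\rho''})}\leq \|v\|_{L^{\infty}(B_{r})}$, so this term is already in the required form.

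The heart of the matter is thus the $L^t$-bound for $\nabla v$ on an interior ball. For this I would use that $v$ solves $\widehat L v = 0$ in $B_R$, a pure divergence equation with uniformly elliptic $\widehat A$ and with $\widehat B\in L^{t}$ satisfying \eqref{lotshat}. The very same Caccioppoli / higher-integrability argument that yields Lemma \ref{tipocacciopp} applies here (now with exponent $t$ in place of $p$, since only $\widehat B$ is present and lies in $L^{t}$), giving
\begin{equation*}
\|\nabla v\|_{L^{t}(B_{\rho''})} \leq C\,r^{2(1/t-1)}\,\|v\|_{L^{2}(B_{r})},
\end{equation*}
with $C$ depending on $K,\kappa,q$ and on $r/\rho$. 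Since $\|v\|_{L^{2}(B_{r})}\leq C\,r\,\|v\|_{L^{\infty}(B_{r})}$, combining the three inequalities above yields \eqref{Linf}.

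The only nontrivial point is the last displayed Caccioppoli--Meyers inequality for $\widehat L v=0$. If one prefers not to reprove it, an equivalent route is to invoke Lemma \ref{tipocacciopp} directly for $u=mv$ (a solution of $Lu=0$), obtaining an $L^{p}$-bound on $\nabla u$, and then using $\nabla v = \nabla u/m - u\,\nabla m/m^{2}$ together with the two-sided bound $1/2\leq m\leq 2$, \eqref{multibd2} and Hölder's inequality (with $1/t>1/p+1/p$ being fine since $t<p$) to transfer the estimate to $\nabla v\in L^{t}$. Either way the routine part is bookkeeping of exponents and scaling, and no new obstruction arises beyond what is already contained in Lemmas \ref{meyers} and \ref{tipocacciopp}.
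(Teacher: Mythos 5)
Your main argument is precisely the paper's (very terse) proof spelled out: the paper obtains \eqref{Linf} by combining the identity \eqref{firstord}, Lemma \ref{tipocacciopp} applied to the operator $\widehat L$ (with exponent $t$ in place of $p$, exactly as you note), Sobolev/Morrey embedding for $t>2$, and the normalization $\tilde v(x_0)=0$, so the proposal is correct and takes the same route. Only your optional alternative detour contains a small slip --- $t<p$ gives $1/t>1/p$, not $1/t\geq 2/p$, so the H\"older pairing as written need not apply --- but there one can instead use that $u\in W^{1,p}_{\mathrm{loc}}$ with $p>2$ is locally bounded by Morrey embedding, so nothing essential is affected.
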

\begin{proof}
These bounds are straightforward consequences of \eqref{firstord},
by the use of Lemma \ref{tipocacciopp} applied to the operator
$\widehat L$ and by Sobolev inequalities. Note that use is made of
the normalization $\tilde v(x_0)=0$, where $x_0$ is the center of
$B_R$.
\end{proof}
\begin{proof}[Proof of Theorem \ref{SUCP}]
Assume that a solution $u$ to $Lu=0$ has a zero of infinite order
at a point $x_0 \in \Omega$, let $R< R_2$ such that the disk
$B_R$, centered at $x_0$, is contained in $\Omega$. By
\eqref{vdef} also $v$ has a zero of infinite order at  $x_0$, and
by \eqref{Linf}, the same occurs to $\tilde v$. Hence also $f$,
given by \eqref{deff}, does the same. By Theorem \ref{BersNir} we
obtain that $f$, and hence $u$ are identically zero in $B_R$. Then
a standard continuity argument yields that  $u$ is identically
zero in $\Omega$.
\end{proof}

\textbf{Concluding Remark.} In previous studies, \cite{aesca} by
Escauriaza and the author and \cite{Arrv} by Rondi, Rosset,
Vessella and the author, it has been ascertained that, when lower
order terms are absent, or when the operator $L$ is in the
self--adjoint form $Lu = - \divrg(A\nabla u) + du $, with $A$
symmetric, the representation Theorem \ref{BersNir} enables also
to obtain quantitative estimates of unique continuation, such as
doubling inequalities \cite[Proposition 2]{aesca}, three--spheres
inequalities \cite[Proposition 1]{aesca} and \cite[Theorem
1.10]{Arrv}, estimates of propagation of smallness \cite[Theorems
5.1, 5.3]{Arrv} and stability estimates for Cauchy problems
\cite[Theorems 1.9, 7.1]{Arrv}.  In view of the reduction to pure
divergence form obtained in Proposition \ref{purediv}, all such
types of results can be extended to equations of the form
\eqref{eqbase}, \eqref{opbase} treated here. We refrain from
details for the sake of brevity.

\providecommand{\bysame}{\leavevmode\hbox
to3em{\hrulefill}\thinspace}
\providecommand{\MR}{\relax\ifhmode\unskip\space\fi MR }
\providecommand{\MRhref}[2]{%
  \href{http://www.ams.org/mathscinet-getitem?mr=#1}{#2}
} \providecommand{\href}[2]{#2}


\begin{thebibliography}{99}

\bibitem{apreprint} G. Alessandrini, A simple proof of the unique continuation
property for two dimensional elliptic equations in divergence
form, Quaderni Matematici II serie, 276 Agosto 1992, Dipartimento
di Scienze Matematiche, Trieste.
\verb"http://www.dmi.units.it/~alessang/unique92.pdf"

\bibitem{acourant} G. Alessandrini, On Courant's nodal domain theorem, Forum
Mathematicum  10 (1998) 521-532.

\bibitem{adiaz}G. Alessandrini, A. Diaz Valenzuela, Unique determination of
multiple cracks by two measurements, SIAM J. Control Optim. 34 (3)
(1996) 913-921.

\bibitem{aesca} G.Alessandrini, L. Escauriaza,  Null-controllability of
one-dimensional parabolic equations, ESAIM: COCV 14 2 (2008)
284--293.

\bibitem{aispow} G. Alessandrini, V. Isakov e J. Powell, Local uniqueness in the
inverse conductivity problem with one measurement, Trans. Amer.
Math. Soc. 347, 8 (1995), 3031-3041.

\bibitem{amagnanini}G. Alessandrini, R. Magnanini, Elliptic equations in divergence form,
geometric critical points of solutions and Stekloff eigenfunctions,
SIAM J. Math. Anal., 25 (5) (1994), 1259-1268.

\bibitem{ANannales}G. Alessandrini, V. Nesi, Beltrami operators, non-symmetric
elliptic equations and quantitative Jacobian bounds,
 Ann. Acad. Sci. Fenn. Math. 34(2009) 47-67.


 \bibitem{Arrv}G. Alessandrini, L. Rondi, E. Rosset, S. Vessella,
 The stability for the Cauchy problem for elliptic equations, Inverse Problems 25 (2009)123004
 (47pp).

 \bibitem{bersni}
L. Bers and L. Nirenberg,
\newblock On a representation theorem for linear elliptic systems with
  discontinuous coefficients and its applications,
\newblock In {\em Convegno {I}nternazionale sulle {E}quazioni {L}ineari alle
  {D}erivate {P}arziali, {T}rieste, 1954}, pages 111--140. Edizioni Cremonese,
  Roma, 1955.

 \bibitem{BJS}  L. Bers, F. John, and M. Schechter, \emph{Partial differential equations}, Lectures in Applied
Mathematics, Vol. III. Interscience Publishers John Wiley \& Sons,
Inc. New York-London-Sydney, 1964.

\bibitem{boj} B. Bojarski, Generalized solutions of a system of differential
equations of first order and of elliptic type with discontinuous
coefficients, Mat. Sb. N. S. 43 (1957) 451–-503.
\verb"http://www.math.jyu.fi/research/reports/rep118.pdf"


\bibitem{carleman33}
T.~Carleman.
\newblock {Sur les syst\`emes lin\'eaires aux d\'eriv\'ees partielles du
  premier ordre \`a deux variables.}
 C. R. Acad. Sci., Paris 197 (1933) 471--474.

\bibitem{hartmanw}
P.~Hartman and A.~Wintner, On the local behavior of solutions of
non-parabolic partial differential equations, Amer. J. Math. 75
(1953) 449--476.


\bibitem{martio} O. Martio,
Counterexamples for unique continuation,     manuscripta
mathematica, 60, 1(1988)     21--47.

\bibitem{meyers} N. G. Meyers,  An $L\sp{p}$--estimate for the gradient of
solutions of second order elliptic divergence equations,
 Ann. Scuola Norm. Sup. Pisa (3)  17  (1963) 189--206.

\bibitem{miranda}
C. Miranda,
\emph{{P}artial {D}ifferential {E}quations of {E}lliptic {T}ype},
Second revised edition, Ergebnisse der Mathematik und ihrer Grenzgebiete, Band 2, Springer-Verlag, New York-Berlin 1970.







\bibitem{schulzzeit} F. Schulz,  On the unique continuation property of
elliptic divergence form equations in the plane, Math. Z. 228 (1998)
201--206.


\end{thebibliography}
\end{document}